\newcommand\C{{\mathbb C}}
\newcommand\oC{\widehat {\mathbb C}}
\newcommand\R{{\mathbb R}}
\renewcommand\:{\colon}
\newcommand\sub {\subseteq}
\newcommand\ra {\rightarrow}
\renewcommand\Im{\mathop{\mathrm{Im}}}
\renewcommand\Re{\mathop{\mathrm{Re}}}
\newcommand\Om{\Omega}
\newcommand\ga{\gamma}
\newcommand\no{\noindent}
\newtheorem{theorem}{Theorem}[section]
\newtheorem{proposition}[theorem]{Proposition}
\theoremstyle{definition}
\title{
Uniformization by square domains}%
\author{Mario Bonk}
\address{Mario Bonk\\Department of Mathematics\\
University of California\\ Los Angeles, CA 90095, USA.} \email{mbonk@math.ucla.edu}
\thanks{The author was partially supported by NSF grant  DMS-1162471}
\dedicatory{Dedicated to David Minda on the occasion of his retirement}
\date{May 9, 2016}
\begin{document}

\abstract{Let $\Om$ be a finitely connected subdomain of the Riemann sphere $\oC$ with $\infty\in \Om$. We find an extremal problem for conformal maps  on $\Om$ with suitable normalization at $\infty$ whose unique solution is a map onto a square domain, that is, a domain in $\oC$ whose complementary components are (possibly degenerate) squares with sides parallel to the real or the  imaginary  axis.}
\endabstract

\maketitle

\section{Introduction}\label{s:Intro}
 
In the following,  $\Om$ will always denote  a domain (that is, an open and connected set) in the Riemann sphere $\oC=\C\cup\{\infty\}$
with $\infty\in \Om$. We consider the family $\mathcal{F}$ of all conformal maps 
$f\:\Om\ra f(\Om)\sub \oC$ with the normalization 
\begin{equation} \label{norm} 
f(z) = z+ \frac{a_1}{z}+\dots
\end{equation} 
for $z$ near $\infty$. 
It is a classical and well-known fact \cite[Chapter 5]{Go} that the functional  
\begin{equation} \label{slit}
 f\in \mathcal{F}\mapsto \mathcal{L}(f):= \Re(a_1)
 \end{equation}  
has a unique minimum that is achieved by a conformal map $f=f_0\:\Om \ra D$ onto a parallel slit domain $D\sub \oC$
with slits  parallel to the imaginary  axis in $\C$.    Here by definition a {\em parallel slit domain} $D$ is a domain 
in $\oC$ with $\infty\in D$ whose complementary 
components are  closed line segments parallel 
to a fixed direction or single points (viewed as degenerate line segments). 
If one considers the more general functional 
 \begin{equation} \label{slit2}
 f\in \mathcal{F}\mapsto \mathcal{L}_\alpha(f):=\Re(e^{i\alpha} a_1)
 \end{equation}  
 for fixed $\alpha\in \R$, then it has a unique minimum given by a conformal map of $\Om$ onto a parallel  slit domain with slits parallel to the line $\ell_\alpha=\{ i e^{i\alpha/2}t:t\in \R\}$.  
 
 Note that finding the maximum 
 of $\mathcal{L}_\alpha$ is equivalent to finding the minimum of $\mathcal{L}_{\alpha+\pi}$. 
 In particular, the functional $\mathcal{L}=\mathcal{L}_0$ has a unique maximum given by a 
 conformal map onto a parallel  slit domain
with  slits  parallel to the real   axis.
 
These facts are behind a conceptually simple  proof of the statement that  every domain $\Om$ in $\C$ can be uniformized by a parallel slit domain $D$ (meaning that there exists a conformal map $f\:\Om \ra D$):
 one first shows by a compactness argument that a minimum $f=f_0$ of the functional in \eqref{slit} exists and 
  then  that $f_0$ maps onto a slit domain by a variational argument. 
  
 In contrast to this situation, it is an open problem
 whether each domain $\Om$ can be uniformized 
 by a {\em circle domain}, i.e.,  a domain in $\oC$ 
 whose complementary components are single points or Euclidean  disks. This is known as 
 Koebe's ``Kreisnormierungsproblem". 
 
 On the positive side, 
 if $\Om$ is finitely connected or, more generally,
 has countably many complementary components, then $\Om$ can be uniformized by a circle domain
 (see \cite[Chapter 5]{Go} for the classical case of finitely connected domains and \cite{HS} for domains with countably many complementary components). The proof of this  fact is much more involved than the simple argument leading to uniformization by parallel  slit domains as outlined  above. No simple extremal problem is known that would solve the ``Kreisnormierungspro\-blem", even in the finitely connected case.
 
   The main point of the present note is the observation that there is such an extremal problem 
  for the uniformization of finitely connected domains onto square domains. By definition a {\em square
  domain} $D$ is a domain in $\oC$ with $\infty
  \in D$ whose complementary components are single points or closed squares with sides parallel to the real or the imaginary axis.  
    
  To formulate this extremal problem, we fix some notation. If $f\in \mathcal{F}$ is a conformal map defined on a finitely connected  $\Om$ as above, we denote by $D=f(\Om)$ its 
  image domain and by $K_j$, $j=1, \dots, n$, the complementary components of $D$ in $\oC$. We allow 
 $n=0$ here. Then  $D=\Om=\oC$ and the class $\mathcal{F}$ just consists of the identity map.    We could  ignore components of $\Om$ and $D$ that are points, because an isolated point in the complement of $\Om$ is a removable singularity. For simplicity, we allow such components. 
  
  Each  set $K_j$  is a compact subset of $\C$.
 We denote by $A_j$ the Euclidean area (that is, the Lebesgue measure)   of  $K_j$ and by 
 $V_j$ its {\em vertical variation} defined as 
 $$ V_j= \max_{w\in K_j} \Im(w)-\min_{w\in K_j}\Im(w).$$ 
 Of  course, the quantities $A_j$ and $V_j$ depend on $f$, but for simplicity we suppress this dependence in our notation. 
  
We can now state our main theorem.

 \begin{theorem}\label{main}
 Let $\Om$ be a finitely connected domain in $\oC$ 
 with $\infty\in \Om$. Among all conformal maps 
 $f\: \Om \ra D=f(\Om)\sub \oC$ with the normalization
 \begin{equation} \label{norm2} 
 f(z) = z+ \frac{a_1}{z}+\dots 
 \end{equation} 
 the functional 
 \begin{equation}\label{funct} 
 f\in \mathcal{F} \mapsto \mathcal{S}(f) :=2\pi \Re(a_1)+
 \sum_{j=1}^n (V_j^2-A_j)
 \end{equation} 
 has a unique minimizer $f=f_0\in \mathcal{F}$. 
 The map $f_0$ is the unique conformal 
 $ f_0\: \Om \ra D$ map with the normalization \eqref{norm2} onto a square domain $D$.  \end{theorem}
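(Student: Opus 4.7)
The plan is to follow the classical variational approach to slit-domain uniformization sketched in the introduction, adapted to the new functional $\mathcal{S}$. The argument splits into three stages: existence of a minimizer, a first-variation analysis forcing each complementary component to be a square, and uniqueness.

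For existence, the normalization \eqref{norm2} makes $\mathcal{F}$ a normal family on $\Omega$: classical area-theorem and Koebe distortion bounds control $|a_1|$ and the diameters of the complementary components $K_j$ in terms of $\Omega$ alone. Combined with upper semi-continuity of the $A_j$ and continuity of the $V_j$ under Hausdorff convergence of the $K_j$, this shows $\mathcal{S}$ is lower semi-continuous on $\mathcal{F}$, so a minimizer $f_0 \in \mathcal{F}$ exists by standard compactness.

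For the variational step, let $f_0\colon\Omega\to D_0$ be a minimizer with complementary components $K_1,\ldots,K_n$. For any $g$ holomorphic on $D_0$ with $g(w)=c/w+O(1/w^2)$ at infinity and small $t\in\C$, the competitor $f_t:=(\mathrm{id}+tg)\circ f_0$ lies in $\mathcal{F}$ with leading coefficient $a_1+tc$. Green's theorem applied to the perturbed boundary gives $\delta A_j=-\Im(tI_j)$ with $I_j=\oint_{\partial K_j^+}g\,d\bar w$, while tracking the motion of the top and bottom points $w_j^{\mathrm{top}},w_j^{\mathrm{bot}}\in\partial K_j$ under $w\mapsto w+tg(w)$ gives $\delta V_j=\Im\bigl(t[g(w_j^{\mathrm{top}})-g(w_j^{\mathrm{bot}})]\bigr)$. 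Requiring $\delta\mathcal{S}=0$ for all complex directions of $t$ yields the complex identity
\[
\sum_{j=1}^n\Bigl[2V_j\bigl(g(w_j^{\mathrm{top}})-g(w_j^{\mathrm{bot}})\bigr)+\oint_{\partial K_j^+}g(w)\,d\bar w\Bigr]=-2\pi i c.
\]
Testing against the Cauchy kernels $g(w)=1/(w-w_0)^k$ for $k\ge 1$ with $w_0$ in the interior of $\bigcup_jK_j$, and rewriting the boundary integrals via Stokes as area integrals over $\bigcup_jK_j$, produces a family of functional identities---analytic in $w_0$---that I expect to force each $\partial K_j$ to consist of horizontal and vertical straight segments with equal horizontal and vertical extent, i.e., each $K_j$ to be an axis-parallel square (possibly a single point).

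Uniqueness follows from rigidity of square-domain uniformization: two minimizers map $\Omega$ onto square domains with the same $w+O(1/w)$ normalization, and their composition is a conformal bijection between two square domains fixing $\infty$ with unit derivative, which extends by iterated Schwarz reflection across the horizontal and vertical sides of the squares to a univalent self-map of $\oC$, hence to the identity. The principal technical obstacle is the variational step itself: $V_j$ is only Lipschitz in the domain because it involves extrema of $\Im$, and at a square these extrema are attained along an entire horizontal edge rather than at a unique point, so the formula for $\delta V_j$ above must be interpreted as the appropriate one-sided directional derivative and the resulting variational inequalities combined judiciously. Additional work is required to extract the precise geometric conclusion ``square'' (rather than merely ``axis-parallel rectangle'') from the family of test-variation identities.
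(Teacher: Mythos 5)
Your three-stage plan---existence of a minimizer by normal families, a first-variation argument forcing the complementary components to be squares, and uniqueness by reflection---is precisely the route the paper's author flags as \emph{not} available: Remark~2 of the paper states that a normal-family argument does give a minimizer, ``but there seems to be no simple variational argument in the class of conformal maps that shows that the minimizer is a conformal map onto a square domain.'' The step you defer with ``I expect'' and ``additional work is required'' is therefore the entire content of the theorem, and your sketch of it has concrete obstructions. First, $V_j$ is a maximum minus a minimum, so under $w\mapsto w+tg(w)$ it has only \emph{one-sided} directional derivatives given by an envelope formula (a max, resp.\ min, of $\Im(tg)$ over the set of extremal boundary points); the two one-sided derivatives disagree exactly in the configuration you are trying to reach, where the extremum is attained along a whole edge. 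Hence $\delta\mathcal{S}=0$ in all complex directions is not legitimate, and the clean complex identity you test against Cauchy kernels is replaced by a family of inequalities from which no one has extracted the Euler--Lagrange geometry. Second, even formally the identities do not obviously single out squares: $V_j^2-A_j$ vanishes for horizontal slits and is negative for wide rectangles ($b(b-a)<0$ when the height $b$ is less than the width $a$), so ``square'' must emerge from a genuine interplay with the term $2\pi\Re(a_1)$ that your computation does not exhibit. Third, the reflection argument for uniqueness is also gapped: the boundary correspondence induced by a conformal map between two square domains need not send corners to corners, so the image of an open side of a boundary square may contain a corner of the target, and Schwarz reflection across that side is then unavailable. (Normalized square domains are indeed rigid, but in the paper this is \emph{deduced} from the main inequality, not assumed.)

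The paper's actual proof avoids the variational step entirely. It takes the existence of a normalized conformal map $g\:\Om\ra\widetilde\Om$ onto a square domain from the Brandt--Harrington uniformization theorem, notes that the bijection $f\mapsto f\circ g^{-1}$ changes $\mathcal S$ only by the additive constant $-2\pi\Re(b_1)$, and so reduces the theorem to showing that when $\Om$ is \emph{already} a square domain one has $\mathcal S(f)\ge 0$ for every normalized conformal $f$, with equality iff $f=\id$. That inequality is proved by a transboundary extremal-length estimate: one integrates the test density $\rho=|f'|$ on $\Om$ and $\rho=V_j/l_j$ on the $j$-th complementary square over rectangles $\mathcal R=[-l,l]\times[-r,r]$ with $l=r^{2/3}$, compares with the length lower bound along vertical segments, and uses the asymptotics $A=4lr+2\pi\Re(a_1)+o(1)$ for the area enclosed by $f(\partial\mathcal R)$. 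Both the identification of the minimizer and the uniqueness then fall out of the equality case. As written, your proposal does not close the central gap.
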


\section{Proof of Theorem~\ref{main}} \label{s:proof}

Let $\Omega$ be as in Theorem~\ref{main}. It is a known fact 
that there exists a conformal map $g$  of $\Om$ onto a square domain 
$\widetilde \Omega$ with the normalization 
$$g(z) = z+ \frac{b_1}{z}+\dots $$
near $\infty$  (this follows from the Brandt-Harrington Uniformization Theorem; for the formulation of this theorem and its  proof see \cite{S96}).    We denote by $\mathcal{\widetilde F}$ the class of conformal maps 
$f$ on $\widetilde \Om$ normalized as in \eqref{norm2},   and by $\widetilde {\mathcal{S}}$
the functional on $ \mathcal{\widetilde F}$ defined as in \eqref{funct}. 
The map $f\in \mathcal{F}\mapsto \widetilde f:= f\circ g^{-1}$ is a bijection between $ \mathcal{F}$   and $\mathcal{\widetilde F}$.
Moreover, if 
$$ f(z) = z + \frac{a_1}{z}+\dots \text { and }  
 \widetilde f(z) = z + \frac{\widetilde a_1}{z}+\dots $$ 
 near $\infty$, then 
 $\tilde a_1= a_1-b_1$. Since the maps $f$ and $\tilde f$ have the same image domain $D=f(\Omega)= \widetilde f(\widetilde \Omega)$, we have 
 $$ \widetilde {\mathcal{S}}(\widetilde f) = \mathcal{S}(f)- 2\pi \Re(b_1). $$  
 So under the bijection $f\leftrightarrow \widetilde f$, the functionals ${\mathcal{S}}$
and  $\widetilde {\mathcal{S}}$   correspond to each other and just differ by the fixed additive constant 
 $-2\pi \Re(b_1)$.   
 In this way, the proof of 
 Theorem~\ref{main} is reduced  to the case where $\Om$ is a square domain to begin with.  

The theorem will now follow from the following 
statement. 

\begin{proposition}\label{prop:main}  
 Let $\Om$ be a finitely connected square domain in $\oC$ 
 with $\infty\in \Om$. For  all conformal maps 
 $f\: \Om \ra D:=f(\Om)\sub \oC$ with the normalization
\eqref{norm2}  
we have 
\begin{equation}\label{mainineq} 
  \mathcal{S}(f) =2\pi \Re(a_1)+
 \sum_{j=1}^n (V_j^2-A_j)\ge 0 
 \end{equation} 
 with quality if and only if $f$ is the identity
  on $\Om$.  
 
  Moreover, the 
 identity on $\Om$ is the only conformal map $f$  of $\Om$ with the normalization \eqref{norm2}   onto a square domain.     
\end{proposition}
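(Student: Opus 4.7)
My plan is to reduce the proposition to a single identity expressing $\mathcal{S}(f)$ as a sum of manifestly nonnegative quantities. Denote the $n$ complementary squares of $\Omega$ by $Q_j=[x_j,x_j+\sigma_j]\times[y_j,y_j+\sigma_j]$ (so $A_j^\Omega=\sigma_j^2$, with degenerate point components handled by $\sigma_j=0$), set $u=\Re f$ and $v=\Im f$, and define
\[
I_j\;:=\;\oint_{\partial Q_j}v\,dx\;=\;\int_{x_j}^{x_j+\sigma_j}\bigl[v(x,y_j)-v(x,y_j+\sigma_j)\bigr]\,dx
\]
with $\partial Q_j$ oriented counterclockwise. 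The identity I aim to establish is
\begin{equation*}
\mathcal{S}(f)\;=\;\int_{\Omega}|f'(z)-1|^2\,dA(z)\;+\;\sum_{j=1}^{n}\bigl(V_j^2+\sigma_j^2+2I_j\bigr). \tag{$\star$}
\end{equation*}

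Granting $(\star)$, the conclusion is immediate. Since $v|_{\partial Q_j}$ takes values in an interval of length $V_j$, an elementary edge-by-edge estimate gives $|I_j|\le V_j\sigma_j$, so
\[
V_j^2+\sigma_j^2+2I_j\;\ge\;(V_j-\sigma_j)^2\;\ge\;0,
\]
whence $\mathcal{S}(f)\ge 0$. Equality forces $\int_\Omega|f'-1|^2\,dA=0$, hence $f'\equiv 1$, and the normalization $f(z)=z+a_1/z+\cdots$ then gives $f=\mathrm{id}$. For the moreover statement, if $D=f(\Omega)$ is a square domain each $K_j$ is a square or a point, so $V_j^2=A_j$ and $\mathcal{S}(f)=2\pi\Re(a_1)$; the inequality gives $\Re(a_1)\ge 0$. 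Applying the same inequality to the inverse map $f^{-1}(w)=w-a_1/w+\cdots$, viewed as a conformal map from the square domain $D$ onto the square domain $\Omega$, yields $-\Re(a_1)\ge 0$; combined, $\Re(a_1)=0$, $\mathcal{S}(f)=0$, and $f=\mathrm{id}$.

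The derivation of $(\star)$ will use two Stokes-type computations on $\Omega_R=\Omega\cap\{|z|<R\}$ followed by $R\to\infty$. First, Stokes applied to the $1$-form $(\bar f f'-\bar z)\,dz$, whose exterior derivative equals $(|f'|^2-1)\cdot 2i\,dx\wedge dy$, yields the classical Gronwall-type area identity
\[
\sum_{j=1}^{n}A_j\;=\;\sum_{j=1}^{n}\sigma_j^2\;-\;\int_{\Omega}(|f'|^2-1)\,dA,
\]
the boundary contribution on $\{|z|=R\}$ vanishing in the limit by the expansion $f(z)=z+a_1/z+\cdots$. Second, Stokes applied to $-(v-y)\,dx$, whose exterior derivative is $(v_y-1)\,dA=\Re(f'-1)\,dA$ by Cauchy-Riemann, yields
\[
\int_{\Omega}\Re(f'-1)\,dA\;=\;\sum_{j=1}^{n}\bigl(I_j+\sigma_j^2\bigr)\;-\;\pi\Re(a_1),
\]
where the $-\pi\Re(a_1)$ is the residue-type contribution from $\{|z|=R\}$, computed directly from $v-y=\Im(a_1/z)+O(|z|^{-2})$, and the $+\sigma_j^2$ comes from $\oint_{\partial Q_j}y\,dx=-\sigma_j^2$ on a square of area $\sigma_j^2$. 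Writing $|f'|^2-1=|f'-1|^2+2\Re(f'-1)$, plugging both identities into $\mathcal{S}(f)=2\pi\Re(a_1)+\sum V_j^2-\sum A_j$, and observing the cancellation of the $\pm 2\pi\Re(a_1)$ terms produces $(\star)$.

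The main obstacle I expect is the orientation and asymptotic bookkeeping in the two Stokes steps: the boundary of $\Omega$ is traversed clockwise around each $Q_j$, and the boundary-at-infinity contribution vanishes in the first identity but contributes exactly $-\pi\Re(a_1)$ in the second. The conceptual point behind the choice of $-(v-y)\,dx$---as opposed to the equally valid symmetric counterpart $(u-x)\,dy$, which has the same exterior derivative---is that this specifically \emph{vertical} $1$-form is what surfaces the vertical variations $V_j$ in $(\star)$, exactly matching the form of the functional $\mathcal{S}$; the square shape of each $Q_j$ is used crucially to produce the $\sigma_j^2$ contribution via $\oint_{\partial Q_j}y\,dx=-\sigma_j^2$ so that $(V_j-\sigma_j)^2$ can be completed.
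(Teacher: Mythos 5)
Your identity $(\star)$ is correct --- I checked both Green's theorem computations, the $-\pi\Re(a_1)$ residue at infinity, the $\oint_{\partial Q_j}y\,dx=-\sigma_j^2$ term, and the final cancellation --- and your proof is a genuinely different route from the paper's. The paper argues via a transboundary-extremal-length estimate: it extends $|f'|$ to a density $\rho$ on all of $\C$ by setting $\rho=V_j/l_j$ on each complementary square, computes $\int_{\mathcal R}\rho^2=4lr+\mathcal S(f)+o(1)$ asymptotically over a rectangle $\mathcal R=[-l,l]\times[-r,r]$ with $l=r^{2/3}$, and bounds this from below by Cauchy--Schwarz applied to vertical segments, using that $f(\ell_x\cap\Om)$ together with the $K_j$ meeting $\ell_x$ connects $f(x+ir)$ to $f(x-ir)$; the equality case needs a second pass with the density $1+\rho$. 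Your argument replaces all of this with an exact decomposition of $\mathcal S(f)$ into the manifestly nonnegative pieces $\int_\Om|f'-1|^2$ and $(V_j-\sigma_j)^2\le V_j^2+\sigma_j^2+2I_j$, where the flux bound $|I_j|\le V_j\sigma_j$ plays exactly the role of the paper's connectivity estimate \eqref{lint}, localized to one square. What your approach buys is an exact formula (hence an instantaneous equality analysis and even a quantitative deficiency term); what the paper's buys is that it never integrates over $\partial Q_j$. That is the one point you should shore up: $f$ need not extend continuously to $\partial Q_j$ (the components $K_j$ are arbitrary compact connected sets), so $I_j$ and the terms $\oint_{\partial Q_j}\bar f f'\,dz=2iA_j$ must be defined as limits over slightly enlarged squares $Q_j^\epsilon$ exhausting $\Om$. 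This works: the area enclosed by $f(\partial Q_j^\epsilon)$ decreases to $A_j$, the limit defining $I_j$ exists because the corresponding area integrals converge, and the bound $|I_j|\le V_j\sigma_j$ survives the limit since the cluster set of $f$ along $\partial Q_j$ lies in $K_j$, so $v$ on $\partial Q_j^\epsilon$ takes values in an interval of length $V_j+o(1)$. With that technical remark added, your proof is complete, and arguably sharper than the published one.
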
 
 
\no {\em Proof.} Before we show the main inequality 
\eqref{mainineq}, let us convince ourselves how the 
last uniqueness statement can be derived from the uniqueness statement in \eqref{mainineq}. 

Indeed, suppose $f\: \Om \ra \Om'$ is a conformal map onto another  square domain domain $\Om'$.
The map $f^{-1}$ on $\Om'$  is also normalized as in
 \eqref{norm2}, where 
 the coefficients $a_1'$ of $f^{-1}$ and $a_1$ of $f$ are related by the equation $a'_1=-a_1$. 

Since the roles of $\Om$ and $\Om'$ are symmetric, 
we may assume that $\Re(a_1)\le 0$; for otherwise, we consider $f^{-1}$ instead of $f$. 
 
 Since $\Om'=f(\Om)$ is a square domain, we have 
 $V_j^2=A_j$ for $j=1, \dots, n$. Hence 
 $\mathcal{S}(f)=\Re(a_1)\le 0$, and so $\mathcal{S}(f)=0$ by \eqref{mainineq}.
 Now from the uniqueness statement in  \eqref{mainineq} we deduce that $f$ is the identity on 
 $\Om$ as desired. 
 
To  prove \eqref{mainineq}, let $f$ be as in the statement.  We consider the rectangle 
$\mathcal{R}=[-l,l]\times [-r,r]\subset \R^2\cong \C$
for large $r>0$. Here we chose $l=r^{2/3}$ 
 so 
that 
\begin{equation}\label{l}
 l/r\to 0 \text{ and } r/l^2 \to 0
 \end{equation} 
 as $r\to \infty$. 
 
 In the following, we assume that $r$ is so large that 
 $\oC\setminus \Om$ is contained in the interior of 
 $\mathcal{R}$.  Then $\partial \mathcal{R}\sub \Om$ and $J=f(\partial R)$
 is a Jordan curve in $\C$.  We want to estimate the area $A=A(r)$ of the region enclosed by the positively oriented Jordan curve  
 $J=f(\partial \mathcal{R})$ up to a term $o(1)$ as $r \to \infty$. 
 
By  the second relation in \eqref{l} we have
 \begin{eqnarray*} 
 A &= & \frac 1{2i} \int_J \overline w\, dw 
 = \frac 1{2i} \int_{\partial \mathcal{R}} 
 \overline {f(z)} f'(z)\, dz\\
 &=& \frac 1{2i}  \int_{\partial \mathcal{R}}
 \overline {\left(z+ \frac{a_1}{z}+\dots \right) }
\left(1- \frac{a_1}{ z^2}+\dots \right)\, dz \nonumber 
 \\
&=& \frac 1{2i}   \int_{\partial \mathcal{R}}
\left(\overline z + \frac {\overline a_1}{\overline z}
- \frac{a_1 \overline{z}}{z^2} + O\left(\frac{1}{|z|^2}\right)\right)\, dz\nonumber  \\
&=& 4rl +   \int_{\partial \mathcal{R}}
\Im\left(\frac {\overline a_1z}{\overline z}\right)
\, \frac {dz}{z}+o(1).  \nonumber
\end{eqnarray*}  
It remains to estimate the last  integral in this equation. Note that for this we may ignore its imaginary part, because the expressions $A$ and 
$4rl$ are real. 

We set $a_1=\alpha+i\beta$ with $\alpha, \beta\in \R$ and $\gamma(t)=l+it$ for $t\in [-r,r]$. Then 
by  the first relation in \eqref{l}
we have  \begin{eqnarray}  \int_{\partial \mathcal{R}}
\Im\left(\frac {\overline a_1z}{\overline z}\right)
\, \frac {dz}{z} &= & 2\int_{\ga}
\Im\left(\frac {\overline a_1z}{\overline z}\right)
\, \frac {dz}{z} +o(1) \nonumber \\
&= & 2\int_{-r}^r
\Im\left(\frac {\overline a_1\ga(t)}{\overline {\ga(t)} }\right)\, \frac{i(l-it)}{l^2+t^2}\, dt + o(1)  \nonumber \\
&= & 2\int_{-r}^r  \frac {2\alpha l t- \beta (l^2-t^2)}{l^2+t^2}\cdot \frac{t}{l^2+t^2}\, dt + o(1) 
\nonumber \\
&= & 4 \alpha \int_{-r}^r \frac { l t^2}{(l^2+t^2)^2}\, dt + o(1)  
 \nonumber \\
 &=& 4 \alpha \int_{-r/l}^{r/l} \frac { u^2}{(1+u^2)^2}\, du + o(1) \nonumber \\
&=&4 \alpha   \int_{-\infty}^{+\infty} \frac { u^2}{(1+u^2)^2}\, du + o(1) = 2\pi \Re(a_1) +o(1).
\nonumber 
 \end{eqnarray}  
Hence 
\begin{equation}\label{Aasymp}
A= 4lr + 2 \pi \Re(a_1) +o(1)
\end{equation} 
as $r\to \infty$.

For $j=1, \dots, n$ we denote by $S_j$ the complementary components 
of $\Om$ and, as before,  by $K_j$ the complementary components of $D=f(\Om)$. We may assume that the labels are chosen such that these components correspond to each other in the sense that if 
$ z\in \Om \to \partial S_j$, then $f(z)\to \partial K_j$ for $j=1, \dots, n$. 

The components $S_j$ are squares (or possibly points as  degenerate squares) 
with sides parallel to the real or  the imaginary axis. Let  $l_j\in [0, \infty)$ be the side length of 
$S_j$.  We now  define a Borel function $\rho\: \C\ra [0, \infty]$ as follows: 
\begin{equation}\label{rho}\rho(z) = \left\{ \begin{array} {cl} |f'(z)| &\text{ for }
z\in \Om,  \\
\displaystyle  V_j/l_j& \text{ for } z \in S_j  \text{ and } l_j>0, \\
\infty & \text{ for } z \in S_j   \text{ and } l_j=0. 
\end{array} \right.
\end{equation}
The rectangle  $\mathcal{R}$ contains all the squares $S_j$ in its interior.
 So by \eqref{Aasymp} we have 
 \begin{eqnarray} \label{intr2}
 \int_{\mathcal{R}} \rho^2 &=& \int_{\mathcal{R}\cap\Om}
 |f'|^2 + \sum_{j=1}^n V_j^2\\
 &=& \text{Area} (f(\mathcal{R}\cap\Om))+ 
 \sum_{j=1}^n V_j^2\nonumber \\
 &=& \text{Area} (f(\mathcal{R}\cap\Om))+
 \sum_{j=1}^n  A_j+ \sum_{j=1}^n (V_j^2-A_j)\nonumber\\
 &=&A+\sum_{j=1}^n (V_j^2-A_j)\nonumber  \\
 &=& 4lr + \mathcal{S}(f) + o(1)  \nonumber
 \end{eqnarray}
as $r\to \infty$. Here and in similar integrals below, integration is with respect to Lebesgue measure, and $\text{Area}(M)$ denotes the  Lebesgue measure of a Borel set $M\sub \C$. 

 For $x\in [-l,l]$  let $\ell_x$be the line segment $\ell_x=\{ x+it: t\in [-r,r]\}$. 
Then we have 
\begin{eqnarray}\label{lint} \int_{\ell_x}\rho(z)\, |dz| &=& \int_{\ell_x\cap \Om} |f'(z)|\, |dz|+
\sum_{S_j\cap \ell_x\ne \emptyset} V_j\\ 
&\ge & \Im (f(x+ir)-f(x-ir)) = 2r + O\left(\frac{1}r\right).\nonumber 
\end{eqnarray}  
Here we used the fact that the union of the set $f(\ell_x\cap \Om)$ together with all sets  $K_j$
with  $S_j\cap \ell_x\ne \emptyset$ forms a connected set joining the points $f(x+ir)$ and $f(x-ir)$. 
 
It follows that 
$$\left(\int_{\ell_x}\rho(z)\, |dz| \right)^2\ge 4r^2+O(1),$$
and using \eqref{l} and \eqref{intr2}
 we conclude 
\begin{eqnarray} \label{lrest} 4lr +o(1)  &\le &\frac 1{2r}\int_{-l}^l\left(\int_{\ell_x}\rho(z)\, |dz|\right)^2\, dx
 \\ 
&\le & \int_{\mathcal{R}}\rho^2 =4lr +\mathcal{S}(f)+o(1). \nonumber
\end{eqnarray}  
This implies $\mathcal{S}(f)\ge o(1)$ as $r\to \infty$, and so $\mathcal{S}(f)\ge 0$ as desired.  

Suppose we have the equality  $\mathcal{S}(f)=0$. Then 
 \begin{equation}\label{aseq}
 \int_{\mathcal{R}}\rho^2 =4lr +o(1)
 \end{equation}
 as $r\to \infty$. 
 If we define $\widetilde \rho=(1+\rho)$, then 
 by \eqref{lint} we have 
 $$   \int_{\ell_x}\widetilde\rho(z)\, |dz|\ge 4r +O\left(\frac{1}r\right), $$
 and so 
 \begin{eqnarray*}  16lr +o(1)  &\le &\frac 1{2r}\int_{-l}^l\left(\int_{l_x}\widetilde
 \rho(z)\, |dz|\right)^2\, dx
 \\ 
&\le & \int_{\mathcal{R}}\widetilde \rho^2.
\end{eqnarray*} 
Hence by \eqref{aseq},
$$ \int_{\mathcal{R}} (1-\rho)^2 = \int_{\mathcal{R}} (2+2\rho^2-\widetilde\rho^2)
\le 8lr+8lr-16lr+o(1)=o(1). 
$$ 
Letting  $r\to\infty$, we conclude that $\int_\C(1-\rho)^2=0$ and so $\rho=1$ almost 
everywhere on $\C$. In particular, $|f'(z)|=1$ for all $z\in \Om$, and so $f'$ is constant. With the given normalization this implies  $f'\equiv 1$ and so $f$ is the identity on $\Om$. 
\hfill $\Box$ 

The proof of Theorem~\ref{main} is complete. 

\section{Remarks}
\label{s:rem} 

1.\ It is not clear which conformal map should maximize the functional in \eqref{funct}. 

2.\ It would be very interesting to see whether the functional   \eqref{funct} can be used to give an independent existence proof for a conformal map of a finitely connected domain $\Om$ onto a square domain $D$ without resorting to the Brandt-Harrington Uniformization Theorem. By a normal family argument one can show the existence of a minimizer, but there seems to be  no simple variational argument in the class of {\em conformal maps} that shows that the minimizer is a conformal map onto a square domain. One can formulate a more general variational problem for real-valued functions $v$ (corresponding to the imaginary parts of the conformal maps) whose solution should give the existence of a suitable conformal map. This is in the same spirit  as classical potential-theoretic methods for the solution of uniformization problems (see \cite{Cou}). 

3.\ Behind the proof of Proposition~\ref{prop:main} is essentially an
asymptotic estimate for the conformal modulus (or extremal length) of a path family 
(see \cite[Chapter 4]{Ahl}), namely the family of  line segments
parallel to the imaginary axis and joining the top to the bottom side of the rectangle $\mathcal{R}$. 
The use of the test function in \eqref{rho} was inspired by Schramm's notion  of  transboundary extremal length  as introduced in  \cite{S95}. 

4.\ Using  asymptotic estimates for modulus in conformal mapping theory is a fairly standard idea. For example, the notion of reduced extremal length as discussed in \cite[Section 4.14]{Ahl} is closely related to this. 
The idea of using a rectangle with side lengths satisfying the conditions in \eqref{l} seems to be new. 

5.\ Very similar arguments as  in this note can be used to show the following (essentially known) fact. 

\begin{proposition}  Let $\Om$ be  a finitely connected  parallel slit domain with slits parallel to the imaginary axis. Then  
$\Re(a_1)\ge 0$ for all conformal map $f\: \Om \ra f(\Om)\sub \oC$ normalized as in \eqref{norm} with equality if and only if $f$ is the identity on $\Om$.
\end{proposition}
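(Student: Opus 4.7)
The plan is to adapt the variational argument from Proposition~\ref{prop:main}, exploiting the fact that vertical slits are one-dimensional to simplify both the test function and the bookkeeping. I will take $\rho=|f'|$ on $\Om$, with any extension to the slits (which have two-dimensional Lebesgue measure zero) being inconsequential. The key geometric observation is that for all but finitely many $x\in[-l,l]$, the vertical segment $\ell_x=\{x+it:t\in[-r,r]\}$ is entirely contained in $\Om$, since a vertical slit can meet $\ell_x$ only when $x$ equals the slit's real part.

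With $\mathcal{R}=[-l,l]\times[-r,r]$ and $l=r^{2/3}$ as in the proof of Proposition~\ref{prop:main}, the area computation in that proof carries over unchanged to give $A=4lr+2\pi\Re(a_1)+o(1)$, where $A$ is the area enclosed by the Jordan curve $f(\partial\mathcal{R})$. Since the slits have zero area, the area formula for conformal maps yields $\int_{\mathcal{R}\cap\Om}|f'|^2=\text{Area}(f(\mathcal{R}\cap\Om))=A-\sum_j A_j$. For each $x$ with $\ell_x\subset\Om$, the image $f(\ell_x)$ is a curve joining $f(x-ir)$ to $f(x+ir)$, hence $\int_{\ell_x}|f'|\,|dz|\ge\Im(f(x+ir)-f(x-ir))=2r+O(1/r)$. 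Cauchy-Schwarz on $\ell_x$ followed by integration in $x$ over $[-l,l]$ gives $\int_{\mathcal{R}\cap\Om}|f'|^2\ge 4lr+o(1)$. Combining,
\[
2\pi\Re(a_1)\ge\sum_j A_j+o(1),
\]
and letting $r\to\infty$ yields $\Re(a_1)\ge 0$.

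For the equality case, if $\Re(a_1)=0$ then necessarily $\sum_j A_j=0$, so each $K_j$ has measure zero and $\int_{\mathcal{R}\cap\Om}|f'|^2=4lr+o(1)$. I would then copy the $(1+\rho)^2$ argument of Proposition~\ref{prop:main} verbatim with $\rho=|f'|$, using the strengthened bound $\int_{\ell_x}(1+|f'|)\,|dz|\ge 4r+O(1/r)$, to conclude $\int_\mathcal{R}(1-|f'|)^2=o(1)$. In the limit this forces $|f'|=1$ almost everywhere on $\Om$, so $f'$ is a unimodular constant by holomorphicity, and the normalization $f'(\infty)=1$ forces $f'\equiv 1$ and hence $f=\id$. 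There is no serious obstacle in this argument; it is in fact cleaner than the square case, as the test function reduces to $|f'|$ and the $(V_j^2-A_j)$ terms collapse to the simpler $-A_j$ contributions.
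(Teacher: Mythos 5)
Your proposal is correct and follows essentially the same route as the paper: the test function $\rho=|f'|$ (extended by $0$ on the slits), the observation that $\ell_x\subset\Om$ for all but finitely many $x$, the asymptotic $A=4lr+2\pi\Re(a_1)+o(1)$, and the $(1+\rho)^2$ trick for the equality case. The only cosmetic difference is that you keep the term $\sum_j A_j$ explicit, obtaining $2\pi\Re(a_1)\ge\sum_j A_j+o(1)$, where the paper simply bounds $\mathrm{Area}(f(\mathcal{R}\cap\Om))\le A$; both yield the same conclusion.
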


\begin{proof} We use the notation as in the proof of Proposition~\ref{prop:main}, but set   
$\rho =|f'|$ on $\Om$ and $\rho=0$ elsewhere. Then 
$$ \int_{\mathcal{R}}\rho^2= \text{Area}(f(\mathcal{R}\cap\Om))\le A=4lr+2\pi \Re(a_1)+o(1)$$ as $r\to \infty$.  We have $\ell_x\sub \Om$ for  every $x\in [-r,r]$ with at most finitely many exceptions. For these values $x$ we have  an estimate as in \eqref{lint}  and obtain a lower bound for $ \int_{\mathcal{R}}\rho^2$ as in 
\eqref{lrest}.  The desired inequality $\Re(a_1)\ge 0$   follows. In case of equality we have 
$$ \int_{\mathcal{R}}\rho^2\le 4lr+o(1).$$ As in the last part of the proof of Proposition~\ref{prop:main}, this  leads to $\rho=1$ almost everywhere on $\C$, and so $f$ must be the identity on $\Om$.  
\end{proof}

\end{document}